\documentclass[a4paper]{amsart}

\usepackage{a4, amssymb, enumerate, amsmath}
\usepackage{color}

\renewcommand\emptyset\varnothing
\renewcommand\subset\subseteq

\linespread{1.08}

\iffalse
\setcounter{tocdepth}{3}

\makeatletter
\newcommand{\mycontentsbox}{%
{\centerline{NOT FOR PUBLICATION}
\small\tableofcontents}}
\def\enddoc@text{\ifx\@empty\@translators \else\@settranslators\fi
\ifx\@empty\addresses \else\@setaddresses\fi
\newpage\mycontentsbox}
\makeatother
\fi

\usepackage{ushort}

\DeclareMathOperator{\En}{End}
\DeclareMathOperator{\ran}{ran}
\DeclareMathOperator{\cp}{charPoly}
\DeclareMathOperator{\sign}{sign}
\DeclareMathOperator{\rank}{rank}

\DeclareMathOperator{\ev}{ev}

\DeclareMathOperator{\St}{St}

\usepackage[pagebackref,colorlinks,linkcolor=red,citecolor=blue,urlcolor=blue,hypertexnames=true]{hyperref}
\begin{document}

\newcommand{\la}{\lambda}

\newcommand{\A}{\cal{A}}
\newcommand{\F}{\cal{F}}
\newcommand{\R}{\mathbb{R}}
\newcommand{\cH}{\cal{H}}
\newcommand{\T}{\cal{T}}
\newcommand{\Z}{\cal{Z}}
\newcommand{\J}{\cal{J}}
\newcommand{\cS}{\cal{S}}
\newcommand{\M}{\cal{M}}
\newcommand{\K}{\cal{K}}
\newcommand{\C}{\mathbb{C}}
\newcommand{\U}{\cal{U}}
\newcommand{\B}{\cal{B}}
\newcommand{\V}{\cal{V}}
\newcommand{\cL}{\cal{L}}
\newcommand{\W}{\cal{W}}
\newcommand{\FF}{\mathbb F}
\newcommand{\KK}{\mathbb K}
\newcommand{\Q}{\mathbb Q}
\newcommand{\N}{\mathbb N}
\newcommand{\wh}[1]{\widehat{#1}}
\newcommand{\rsl}{\mathfrak{sl}}
\newcommand{\wt}{\widetilde}
\newcommand{\ov}{\overline}

\newcommand{\fa}{\FF\ax}

\newcommand{\x}{\ushort X}
\newcommand{\ua}{\ushort A}
\newcommand{\us}{\ushort S}
\newcommand{\uaa}{\ushort a}
\newcommand{\ub}{\ushort B}
\newcommand{\ax}{\langle\ushort X\rangle}
\newcommand{\axn}{\langle X_1,\ldots,X_n\rangle}
\newcommand{\axstar}{\langle\ushort X^*\rangle}
\newcommand{\axnstar}{\langle X_1^*,\ldots,X_n^*\rangle}
\newcommand{\cx}{[\ushort X]}
\newcommand{\axs}{\langle\ushort X,\ushort X^*\rangle}
\newcommand{\axns}{\langle X_1,\ldots,X_n,  X_1^*,\ldots,X_n^*\rangle}
\newcommand{\fastar}[1]{{#1}\ax}

\newtheorem{theorem}{Theorem}[section]
\newtheorem{proposition}[theorem]{Proposition}
\newtheorem{thm}[theorem]{Theorem}
\newtheorem{prop}[theorem]{Proposition}
\newtheorem{lemma}[theorem]{Lemma}
\newtheorem{lem}[theorem]{Lemma}
\newtheorem{corollary}[theorem]{Corollary}
\newtheorem{cor}[theorem]{Corollary}

\theoremstyle{definition}
\newtheorem{remark}[theorem]{Remark}
\newtheorem{rem}[theorem]{Remark}
\newtheorem{definition}[theorem]{Definition}
\newtheorem{defn}[theorem]{Definition}
\newtheorem{example}[theorem]{Example}
\newtheorem{ex}[theorem]{Example}
\newtheorem{examples}[theorem]{Examples}
\newtheorem{question}[theorem]{Question}

\newcommand\cal{\mathcal}

\def\beq{\begin{equation} }
\def\eeq{\end{equation} }
\def\beqn{\begin{equation*} }
\def\eeqn{\end{equation*} }
\newcommand{\csim}{\stackrel{\mathrm{cyc}}{\thicksim}}

\def\ben{\begin{enumerate} }
\def\een{\end{enumerate} }
\numberwithin{equation}{section}

\def\bh{\mathcal B(\mathcal H)}
\def\kh{\mathcal K(\mathcal H)}
\def\cA{\mathcal A}
\title[Local-global principle for linear dependence of nc polynomials]{A Local-global principle for linear dependence of noncommutative polynomials}

\author[Matej Bre\v sar and Igor Klep]{Matej Bre\v sar${}^1$ and Igor Klep${}^2$}

\address{Faculty of Mathematics and Physics, University
of Ljubljana, and \\
  Faculty of Natural Sciences and Mathematics, University of Maribor, Slovenia	}
\email{matej.bresar@fmf.uni-lj.si}
\email{igor.klep@fmf.uni-lj.si}
\thanks{${}^1$Supported by the Slovenian Research Agency (Program No. P1-0288).
${}^2$Supported by the Slovenian Research Agency (Project No. J1-3608 and Program No. P1-0222).}
\subjclass[2010]{Primary 16R50, 08B20, Secondary 16W10}
\keywords{Noncommutative polynomial, free algebra, linear dependence, local linear dependence, polynomial identity, involution}
\date{8 March 2011}

\begin{abstract}
A set of polynomials in noncommuting variables is called
{\em locally linearly dependent} if their evaluations
at tuples of matrices are always linearly dependent.
By a theorem of Camino, Helton, Skelton and Ye,
a finite locally linearly dependent set of polynomials
is linearly dependent.
In this short note an alternative proof
based on the 
theory of polynomial identities
is given.
The method of the proof yields
generalizations to {\em directional} local linear dependence
and evaluations in {\em general algebras} over fields of
{\em arbitrary characteristic}.
A main feature of the proof is that it makes it possible
to deduce bounds on the size of the matrices where the 
(directional)
local linear dependence needs to be tested in order to
establish linear dependence.
\end{abstract}

\iffalse
%%%%%%% text version of the abstract
A set of polynomials in noncommuting variables is called locally linearly dependent if their evaluations at tuples of matrices are always linearly dependent. By a theorem of Camino, Helton, Skelton and Ye, a finite locally linearly dependent set of polynomials is linearly dependent. In this short note an alternative proof based on the theory of polynomial identities is given.
The method of the proof yields generalizations to directional local linear dependence and evaluations in general algebras over fields of arbitrary characteristic. A main feature of the proof is that it makes it possible to deduce bounds on the size of the matrices where the (directional) local linear dependence needs to be tested in order to establish linear dependence.
%%%%%%%
\fi

\maketitle
\section{Introduction}\label{sec:intro}

As part of the studies in free analysis
motivated from systems engineering, Camino, Helton,
Skelton and Ye \cite{CHSY}
consider {\em local} linear dependence of functions in noncommuting (nc)
variables, e.g.~polynomials and rational functions.
One of the core results of \cite{CHSY} is that locally linearly dependent
nc polynomials are ({\em globally}) linearly dependent.
This result is what we call the {\em local-global principle for linear
dependence} (of polynomials).
It has been exploited
repeatedly since; often in connection with noncommutative 
convexity and geometry, cf.~\cite{HHLM,DHM,GHV}. We also refer to
the tutorial \cite{HKM} for a more streamlined presentation of the proof and
its applications.

Our aim is to give an algebraists' response to \cite{CHSY}.
That is, we give a proof of this local-global principle that is
 motivated by the theory of polynomial
identities. As such it applies not only to matrix algebras
but to evaluations in general algebras
over fields of arbitrary characteristic. However, even
in the case of matrix algebras it allows us to extract
additional information, e.g.
the size of the matrices
where the local linear dependence needs to be checked in order
to establish linear dependence. Also, we establish bounds in the
case of directional dependence (see below for definitions and
precise statements), something the original proofs do not allow. 

This note is organized as follows. After a preliminary 
Section \ref{sec:prelim}
introducing all the notions needed, we give our main results  in 
Section \ref{sec:main}.
As this is an algebraic paper addressed also to analysts, 
we will give a somewhat detailed treatment of the algebraic tools 
that will be used in our proofs.

\section{Preliminaries}\label{sec:prelim}

\subsection{Notation and set-up} 
In this section we fix the basic notation and terminology we shall use
throughout the paper.
Let  $\FF$ be a field.

\subsubsection{Free algebra} 

By $\FF\ax$ we denote the free associative algebra generated by $\x = \{X_1,X_2,\ldots\}$, i.e., the algebra of all polynomials in noncommuting variables $X_i$. 
We write $\ax$ for the monoid freely
generated by $\x$, i.e., $\ax$ consists of \emph{words} in the
letters $X_1,X_2,\ldots$ (including the empty word denoted by $1$).
Write $\FF\ax_k$ for the vector space consisting of the
polynomials of degree at most $k$.
Sometimes, for notational convenience, we shall also use $Y_j,Z_j$
to denote noncommuting variables.

\subsubsection{Evaluations and representations}

If $p\in\FF\axn$, $\cA$ is an $\FF$-algebra, and $\uaa\in \cA^n$, then
$p(\uaa)\in\cA$ is the {\em evaluation} of $p$ at $\uaa$.
This gives rise to a \emph{representation} {\rm ev}$_{\uaa}: \FF\axn\to\cA$.

\subsubsection{Directional evaluations}
Suppose $\cA$ is a subalgebra of an endomorphism algebra
$\En(V)$ for an $\FF$-vector space $V$.
Given a polynomial $p\in\FF\axn$, an $n$-tuple 
$\uaa\in\cA^n$ and $v\in V$, the expression
$p(\uaa)v$ is called the
{\em directional evaluation} of $p$ in the direction $(\uaa,v)$.

\subsection{Polynomial identities}
We say that $p\in\FF\axn$ is an {\em  identity} of an $\FF$-algebra $\cA$ if $p(\uaa) =0$ for all $\uaa\in \cA^n$. If $p\ne 0$, then $p$ is called a {\em polynomial identity} of $\A$.  For example, $\cA$ is commutative if and only if $\St_2:=X_1X_2-X_2X_1$ is its polynomial identity. We say that $\cA$ is a PI-{\em algebra} if there exists a polynomial identity of $\cA$. Obviously, subalgebras and homomorphic images of PI-algebras are again PI-algebras. Besides commutative algebras, the simplest examples of PI-algebras are finite dimensional ones. To see this, we introduce, for every $n\in\N$, the so-called {\em standard polynomial}  $\St_n = \St_n(X_1,\ldots,X_n)$ by
 \begin{equation} \nonumber
 \St_n := \sum_{\pi \in S_n} \sign(\pi) X_{\pi(1)}\ldots X_{\pi(n)}
 \end{equation}
 where $S_n$ is the symmetric group of degree $n$. It is easy to see that 
 $$\St_n(X_1,\ldots,X_i,\ldots,X_i,\ldots,X_n) = 0,$$
i.e., $\St_n$ vanishes if two variables are the same.
 Accordingly,  $\St_{n}(a_1,\ldots,a_n)=0$ whenever $a_1,\ldots,a_n$ are linearly dependent elements from an algebra $\cA$. This in particular shows that $\St_{n}$ is a polynomial identity of every algebra $\cA$ with $\dim_\FF\cA< n$. Thus, $\St_{d^2+1}$ is a polynomial identity of  the matrix algebra $M_d(\FF)$. There is, however, a much better result, the {\em Amitsur-Levitzki theorem}, saying that  $\St_{2d}$ is a polynomial identity of   $M_d(\FF)$; moreover, it is a polynomial identity of $M_d(\mathcal{C})$ where $\mathcal{C}$ is an arbitrary commutative algebra. The number $2d$ cannot be lowered: a bit tricky, but elementary argument shows that a polynomial of degree $< 2d$ is never a polynomial identity of $M_d(\FF)$. 
%Furthermore,  a polynomial of degree $< 2d$ cannot even vanish on all symmetric $d\times d$ matrices \cite{Sli}.  
 Therefore a polynomial that was a polynomial identity of $M_d(\FF)$ for every $d$ does not exist. This implies that the algebra of all linear operators on an infinite dimensional vector space  (which contains isomorphic copies of all $M_d(\FF)$ as its subalgebras) is not a PI-algebra. In fact, under mild assumptions a PI-algebra is quite close to a matrix algebra. For instance, it turns out that every prime PI-algebra can be embedded into $M_d(\KK)$ for some $d\ge 1$, where $\KK$ is a field extension of the base field $\FF$. Recall that an algebra $\A$ is said to be {\em prime} if the product of any of its two nonzero ideals is nonzero.
 
 Let us also mention a notion related to a polynomial identity: we say that  $p\in\FF\axn$ is a {\em central polynomial} on $M_d(\FF)$ if $p\ne 0$, $p$ is not a polynomial identity, and 
 $p(\ua)$ is a scalar multiple of the identity matrix for every  $\ua\in M_d(\FF)^n$. For instance, $(X_1X_2 - X_2X_1)^2$ is a central polynomial of $M_2(\FF)$, as one can easily check. It is much harder to find central polynomials on $M_d(\FF)$ for larger $d$'s. Anyhow, it is a fact that they do exist for every $d$.  
  
 For full accounts on polynomial identities we refer the reader to \cite{Pro1} and \cite{Row}. 

\subsection{Capelli polynomials and a theorem of Razmyslov}

For $n\in\N$ we define the {\em Capelli polynomial} $C_{2n-1}=C_{2n-1}(X_1,\ldots,X_{2n-1})$ as follows:
\beqn
C_{2n-1}:= \sum_{\pi\in S_n}
\sign(\pi)
 X_{\pi(1)} X_{n+1} 
X_{\pi(2)} X_{n+2} \cdots X_{\pi(n-1)} X_{2n-1} X_{\pi(n)}.
\eeqn
For example, $C_3= X_1X_3X_2-X_2X_3X_1$. Note that by formally replacing all $X_{j}$, $j > n$,  by $1$, $C_{2n-1}$ reduces to $\St_n$.  Just as for the standard polynomials, one can check that 
$$C_{2n-1}(X_1,\ldots,X_i,\ldots,X_i,\ldots,X_n,X_{n+1},\ldots,X_{2n-1}) = 0,$$
implying that for elements $a_i,b_i$ from an algebra $\cA$ we have
$$C_{2n-1}(a_1,\ldots,a_n,b_1,\ldots,b_{n-1}) = 0$$
whenever $a_1,\ldots,a_n$ are linearly dependent. An important theorem of Razmyslov (cf. \cite[Theorem 2.3.7]{BMM} or \cite[Theorem 7.6.16]{Row}) states that the converse of this observation holds in a rather large class of algebras: 

\begin{theorem}\label{thm:raz} 
Let $\A$ be a centrally closed prime algebra. Then $a_1,\ldots,a_n\in \cA$ are linearly dependent if and only if $C_{2n-1}(a_1,\ldots,a_n,b_1,\ldots,b_{n-1}) = 0$ for all $b_j\in \A$. 
\end{theorem}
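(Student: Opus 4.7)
The ``only if'' direction is exactly the observation made in the paragraph preceding the statement. For the converse I argue by contrapositive and induction on $n$: assuming $a_1, \ldots, a_n$ are linearly independent, the plan is to exhibit $b_1, \ldots, b_{n-1} \in \A$ for which $C_{2n-1}(a_1, \ldots, a_n, b_1, \ldots, b_{n-1}) \ne 0$. The base case $n = 1$ is immediate since $C_1(X_1) = X_1$, so linear independence of the singleton is just $a_1 \ne 0$.

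The key tool for the inductive step is the Laplace-type expansion of $C_{2n-1}$ along its first $X$-slot,
\[
C_{2n-1}(a_1, \ldots, a_n, b_1, \ldots, b_{n-1}) = \sum_{i=1}^n (-1)^{i-1} a_i\, b_1\, C_{2n-3}(a_1, \ldots, \wh{a_i}, \ldots, a_n, b_2, \ldots, b_{n-1}),
\]
where $\wh{a_i}$ indicates omission of $a_i$. Assume for contradiction that the left-hand side vanishes for every choice of $b_1, \ldots, b_{n-1}$. Then fixing $b_2, \ldots, b_{n-1}$ and letting $b_1$ range over $\A$ produces the generalized polynomial identity $\sum_{i=1}^n a_i X c_i = 0$ in the single variable $X$, where $c_i := (-1)^{i-1} C_{2n-3}(a_1, \ldots, \wh{a_i}, \ldots, a_n, b_2, \ldots, b_{n-1})$.

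The heart of the argument is then a classical theorem of Martindale: in a prime algebra with extended centroid $C$, a generalized identity $\sum_i a_i X c_i = 0$ whose $a_i$ are $C$-linearly independent forces every $c_i = 0$. Since $\A$ is centrally closed, its extended centroid is $\FF$, so linear independence over $\FF$ suffices. Applying this gives $C_{2n-3}(a_1, \ldots, \wh{a_i}, \ldots, a_n, b_2, \ldots, b_{n-1}) = 0$ for every $i$ and every $b_2, \ldots, b_{n-1} \in \A$. Taking $i = n$ and invoking the induction hypothesis on the still-linearly-independent tuple $a_1, \ldots, a_{n-1}$ yields the required contradiction.

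The main obstacle is this Martindale step; it is precisely where both primeness and centrally closedness of $\A$ enter, and rather than re-deriving it I would invoke it as a black box from the standard references \cite{BMM} or \cite{Row}. Everything else in the proof is a bookkeeping exercise with the Capelli expansion and an induction.
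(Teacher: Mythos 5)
Your proposal is correct, but note that the paper itself does not prove this statement at all: it is quoted as Razmyslov's theorem with a pointer to \cite[Theorem 2.3.7]{BMM} and \cite[Theorem 7.6.16]{Row}, so there is no internal proof to match. What you wrote is essentially the standard textbook argument behind that citation: the Laplace-type expansion $C_{2n-1}(a_1,\ldots,a_n,b_1,\ldots,b_{n-1})=\sum_{i=1}^n(-1)^{i-1}a_i\,b_1\,C_{2n-3}(a_1,\ldots,\wh{a_i},\ldots,a_n,b_2,\ldots,b_{n-1})$ is correct (check the sign convention against $C_3(a_1,a_2,b)=a_1ba_2-a_2ba_1$), and the reduction of the vanishing of this expression, as $b_1$ ranges over $\A$, to Martindale's lemma on identities of the form $\sum_i a_iXc_i=0$ in prime rings is exactly where primeness and central closedness are used. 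The one point worth making explicit is the one you flagged in passing: Martindale's lemma gives $c_i=0$ only when the $a_i$ are independent over the \emph{extended centroid} $C$, and the theorem concerns $\FF$-linear dependence; the hypothesis ``centrally closed'' is precisely what guarantees $C=\FF$ (this is why the statement fails for, say, a proper field extension of $\FF$ viewed as an $\FF$-algebra, where $C_3$ vanishes identically without $\FF$-dependence). With that reading, the induction with base case $C_1(a_1)=a_1$ and the specialization $i=n$ in the inductive step are sound, so your argument is a legitimate proof modulo the black-boxed Martindale lemma --- which is a fair trade, given that the paper black-boxes the entire theorem.
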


 The definition of a centrally closed prime algebra is too technical to be included here. The reader is referred to \cite{BMM} for a detailed, or to \cite{BCM} for an informal survey on this notion.
 Let us just mention what is relevant for our applications of Theorem \ref{thm:raz}: the  free algebra $\FF\ax$ is a centrally closed prime algebra (cf. \cite[Theorem 2.4.4]{BMM}). 

Let us conclude this section by mentioning that we have used
Razmyslov's Theorem \ref{thm:raz} before -- in \cite[Section 5.5]{BK} 
to prove
a tracial multilinear Nullstellensatz.

\subsection{Locally linearly dependent operators} 
 
 Let $U$ and $V$ be vector spaces over $\FF$. We say that linear operators 
$T_1, \ldots , T_m : U \to V$ are {\em locally linearly dependent} if $T_1u, \ldots , T_m u$ are linearly dependent vectors in $V$ for every $u\in U$. This does not necessarily mean that $T_1, \ldots , T_m $ are linearly dependent operators. Say, if $T_1$ and $T_2$ are rank one operators with the same range, then they are obviously locally linearly dependent, but not necessarily linearly dependent.
Another example: if $\dim_F V < m$, then any linear  operators 
$T_1, \ldots , T_m : U \to V$ are locally linearly dependent, but there is no reason to believe that they are linearly dependent.
 
 The following result shows that the local linear dependence is intimately connected with the finite rank condition.
 
\begin{thm}\label{thm:bs}
If 
$T_1, \ldots , T_m : U \to V$ are locally  linearly dependent operators, then 
then there exist $\alpha_1,\ldots ,\alpha_m \in \FF$, not all zero, such that $S = \alpha_1 T_1 + \cdots + \alpha_mT_m$ satisfies
%\beq\label{eq:bs}
$	\rank S\leq m - 1.$
This inequality  is sharp.
\end{thm}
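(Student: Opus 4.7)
The plan is to proceed by induction on $m$. The base case $m=1$ is immediate: local linear dependence of a single operator forces $T_1 u = 0$ for every $u$, hence $T_1 = 0$ has $\rank 0 \le 0$.

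For the inductive step, I first dispose of two routine sub-cases. If $T_1,\ldots,T_m$ are already linearly dependent, take $S = 0$. If instead the subfamily $T_1,\ldots,T_{m-1}$ is itself locally linearly dependent, the induction hypothesis yields a nonzero $S \in \Span(T_1,\ldots,T_{m-1})$ with $\rank S \le m-2 \le m-1$. Otherwise there is $u_0 \in U$ with $T_1u_0,\ldots,T_{m-1}u_0$ linearly independent in $V$, and the hypothesis supplies unique scalars $\alpha_i \in \FF$ with $T_m u_0 = \sum_{i<m}\alpha_i T_iu_0$; I set $S := T_m - \sum_{i<m}\alpha_i T_i$, which is nonzero and satisfies $S u_0 = 0$. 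The main task is then to show $\ran S \subseteq \Span(T_1u_0,\ldots,T_{m-1}u_0)$, which gives $\rank S \le m-1$.

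To prove this subspace containment, I fix an arbitrary $u_1 \in U$ and pass to the scalar extension by $\FF(t)$, considering the generic vector $u_0 + tu_1$. The vectors $T_i(u_0+tu_1)$ for $i<m$ remain linearly independent over $\FF(t)$, since an $(m-1)\times(m-1)$ minor witnessing their independence at $t=0$ has nonzero constant term. Applying local linear dependence at the generic point produces unique $\beta_i(t) \in \FF(t)$, regular at $t=0$ with $\beta_i(0) = \alpha_i$, such that $T_m(u_0+tu_1) = \sum \beta_i(t) T_i(u_0+tu_1)$. Combined with the identity $S(u_0+tu_1) = t\cdot Su_1$ (by linearity and $S u_0 = 0$), dividing by $t$ and evaluating at $t=0$ yields $Su_1 \in \Span(T_iu_0 : i<m)$ as required. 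Sharpness is exhibited by taking $V = \FF^{m-1}$: any $m$ vectors in $V$ are automatically linearly dependent, yet a generic family $T_1,\ldots,T_m$ has nontrivial combinations of full rank $m-1$.

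The main obstacle I anticipate is the justification of extending local linear dependence from $U$-points of $U_\FF$ to the generic $\FF(t)$-point $u_0+tu_1$. Over an infinite base field this is automatic, since the relevant $m\times m$ minors are polynomials in $t$ of degree at most $m$ vanishing at infinitely many values, hence identically zero. Over a finite field $\FF_q$ with $q > m$ the same degree-counting argument suffices; for small $q$ the argument needs adjustment, for instance by passing to an infinite extension $\KK/\FF$, observing that the $\alpha_i$ remain $\FF$-rational by Cramer's rule applied at $u_0$, and appealing to invariance of rank under scalar extension to transport the conclusion back to $\FF$.
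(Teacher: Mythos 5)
A preliminary remark: the paper does not prove Theorem~\ref{thm:bs} at all; it quotes it from the literature (Aupetit for $\C$, Bre\v sar and \v Semrl \cite{BS} for infinite fields, Meshulam and \v Semrl \cite{MS} for finite fields), so your argument must stand on its own for an arbitrary field $\FF$, which is how the statement is phrased and later used (Lemma~\ref{lem:bs} and Theorem~\ref{thm:v2a} are over an arbitrary base field). For $|\FF|>m$, in particular for infinite $\FF$, your proof is correct and follows the same perturbation idea as the Bre\v sar--\v Semrl argument: the trichotomy is exhaustive, $Su_0=0$, a fixed $(m-1)\times(m-1)$ determinant built from linear functionals keeps $T_1(u_0+tu_1),\dots,T_{m-1}(u_0+tu_1)$ independent over $\FF(t)$, vanishing of every $m\times m$ minor at each $t=c\in\FF$ forces dependence at the generic point, Cramer's rule makes the $\beta_i$ regular at $t=0$ with $\beta_i(0)=\alpha_i$, and dividing $t\,Su_1=\sum_{i<m}(\beta_i(t)-\alpha_i)\,T_i(u_0+tu_1)$ by $t$ and evaluating at $t=0$ places $Su_1$ in $\Span\{T_iu_0:\ i<m\}$.

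The genuine gap is the case $|\FF|\le m$, and the repair you sketch is circular. To run the argument over an infinite extension $\KK\supset\FF$ you would need the extended operators to be locally linearly dependent on $U\otimes_\FF\KK$, but the hypothesis speaks only about vectors of $U$, and when $|\FF|\le m$ the degree count that would propagate dependence to $\KK$-points (equivalently, to the generic point $u_0+tu_1$) is exactly what is missing; $\FF$-rationality of the $\alpha_i$ and invariance of rank under field extension are beside the point, because no conclusion over $\KK$ has been established that could be transported back. This is not a removable technicality: the small-field case is the hard part of the theorem and is precisely the content of \cite{MS}, proved by genuinely different methods, so as written your argument only establishes the statement under the extra hypothesis $|\FF|>m$. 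Finally, sharpness requires an example in which \emph{every} nontrivial combination has rank at least $m-1$; ``a generic family has nontrivial combinations of full rank'' has the wrong quantifier, and genericity is again unavailable over small fields. A uniform example over any field: define $T_1,\dots,T_m:\FF^{2m-2}\to\FF^{m-1}$ by $(T_ix)_j=x_{i+j-1}$; local linear dependence is automatic since the target has dimension $m-1<m$, while for $\alpha\ne 0$ the rows of $\sum_i\alpha_iT_i$ record the coefficients of $z^{j-1}\alpha(z)$ with $\alpha(z)=\sum_i\alpha_iz^{i-1}$, hence are independent because $\FF[z]$ has no zero divisors, so every nontrivial combination has rank exactly $m-1$.
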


It seems that the first result of this kind was obtained by Amitsur \cite{Ami}, however, with a  conclusion that $	\rank S\leq {m+1\choose 2}-1$.
For $\FF = \mathbb C$ Theorem  \ref{thm:bs} was proved by 
Aupetit  \cite{Aup}, for $\FF$ an infinite field by Bre\v sar and \v Semrl \cite{BS}, and finally for $\FF$ a finite field by  Meshulam and \v Semrl \cite{MS}.
The form in which we shall apply Theorem \ref{thm:bs} is as follows:

\begin{lem}\label{lem:bs}
Keep the assumptions of Theorem {\rm\ref{thm:bs}} and assume $U=V$.
 Then the
rank of $C_{2m-1}(T_1,\ldots,T_m,D_1,\ldots,D_{m-1})$
cannot exceed $(m-1) m!$ for any linear $D_j:U\to U$.
\end{lem}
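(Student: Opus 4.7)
The plan is to invoke Theorem \ref{thm:bs} to replace the $m$-tuple $T_1,\ldots,T_m$ by a single operator of small rank, and then exploit the alternating nature of $C_{2m-1}$ in its first $m$ arguments (which is the identity displayed just above the lemma) to reduce the Capelli evaluation to essentially a single term. A crude rank count finishes the job.

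More precisely, Theorem \ref{thm:bs} yields scalars $\alpha_1,\ldots,\alpha_m \in \FF$, not all zero, such that $S := \alpha_1 T_1 + \cdots + \alpha_m T_m$ satisfies $\rank S \leq m-1$. Pick $k$ with $\alpha_k \neq 0$ and solve for $T_k$:
$$T_k = \frac{1}{\alpha_k}\Bigl(S - \sum_{i \neq k} \alpha_i T_i\Bigr).$$
Substituting this expression into the $k$-th slot of $C_{2m-1}(T_1,\ldots,T_m,D_1,\ldots,D_{m-1})$ and using multilinearity, every term with $T_i$ ($i \neq k$) in position $k$ repeats an argument among the first $m$ slots and therefore vanishes by the alternating property. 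Only the contribution from $S$ remains, giving
$$C_{2m-1}(T_1,\ldots,T_m,D_1,\ldots,D_{m-1}) = \frac{1}{\alpha_k}\, C_{2m-1}(T_1,\ldots,T_{k-1},S,T_{k+1},\ldots,T_m,D_1,\ldots,D_{m-1}).$$

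The right-hand side is a sum of $m!$ monomials, one for each $\pi \in S_m$, and each such monomial is a product in which $S$ appears exactly once. Since the rank of a product is bounded by the rank of any of its factors, each monomial has rank at most $\rank S \leq m-1$, and a sum of $m!$ operators of rank at most $m-1$ has rank at most $(m-1)\,m!$, as desired.

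The only conceptual point is realizing that the alternating property lets one substitute a linear combination and annihilate all but one contribution; beyond this the argument is bookkeeping. The resulting bound $(m-1)m!$ is surely not optimal, but it is explicit and depends only on $m$, which is what is needed for the applications in the next section.
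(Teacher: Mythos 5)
Your proof is correct and follows essentially the same route as the paper: both invoke Theorem \ref{thm:bs} to express one $T_k$ (the paper takes $k=m$ without loss of generality) as a low-rank operator plus a combination of the others, use multilinearity and the alternating property of $C_{2m-1}$ to reduce to a Capelli evaluation with the rank-$\leq m-1$ operator in one slot, and then bound the rank of the resulting sum of $m!$ monomials.
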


\begin{proof}
By Theorem \ref{thm:bs} we may without loss of generality assume
$T_m$ is a linear combination of $T_1,\ldots,T_{m-1}$ plus an operator
 $E$ of rank $\leq m-1$, e.g.,
\beqn
T_m=E+\sum_{j=1}^{m-1} \alpha_j T_j
\eeqn
for some scalars $\alpha_j$.
Then
\begin{multline*}
C_{2m-1}(T_1,\ldots,T_{m-1},T_m,D_1,\ldots,D_{m-1}) \\
= 
C_{2m-1}\Big(T_1,\ldots,T_{m-1},E+\sum_{j=1}^{m-1} \alpha_j T_j,D_1,\ldots,D_{m-1}\Big).
% \\
%=&
%C_{2m-1}(T_1,\ldots,T_{m-1},\sum_{j=1}^{m-1} \alpha_j T_j,D_1,\ldots,D_{m-1})
%+ (*),
\end{multline*}
Since $C_{2m-1}$ is linear in each variable, it follows that 
\beq\label{eq:rhs}
C_{2m-1}(T_1,\ldots,T_{m-1},T_m,D_1,\ldots,D_{m-1}) = C_{2m-1}(T_1,\ldots,T_{m-1},E,D_1,\ldots,D_{m-1}).
\eeq
The right-hand side of \eqref{eq:rhs} is equal to a sum of $m!$ operators, each of which has rank $\le m-1$. This yields the desired bound.
\end{proof}

\begin{remark} \label{rem}
Let us add that if $A:U\to U$ is a linear
operator of rank $r$, then 
$A,A,\ldots,A^{r+1}$ are linearly
dependent. This is well-known, but let us give a short proof for the sake of completeness. 
Consider the induced mapping $$\check A:U/\ker(A)\to\ran(A).$$
Since $\dim \big(U/\ker(A)\big)=\dim (\ran(A))= r$,
the characteristic polynomial 
\beq
\cp_{\check A}= a_0 + a_1 \lambda + \cdots + (-1)^r \lambda^r \in\FF[\lambda]
\eeq
is of degree $r$ and kills $\check A$ by the Cayley-Hamilton theorem.
Thus
\beq
a_0 A + a_1 A^2 + \cdots + (-1)^{r} A^{r+1}=0.\qedhere
\eeq
\end{remark}

\subsection{Local (directional) linear dependence of polynomials}

Let $\cA$ be an $\FF$-algebra and 
let $S\subseteq \FF\ax$.

\ben[\rm(1)]
\item
We say that $S$ is 
{\em $\cA-$locally linearly dependent}
if the elements 
$$\{ p(\ua) \mid p\in S\}\subset \cA$$
are linearly dependent
for every $\ua\in\cA^\N$.
\item
Now suppose $\cA$ is a subalgebra of 
$\En(V)$ for an $\FF$-vector space $V$.
We say that $S$ is {\em $\cA$-locally directionally linearly
dependent}
if the vectors
$$
\{ p(\ua)v \mid p\in S\}\subset V$$
are linearly dependent
for every $\ua\in\cA^\N$ and $v\in V$.
\item
$S$ is (globally) {\em linearly dependent} if it is linearly dependent
in $\FF\ax$, i.e., there are $\alpha_s\in\FF$ ($s\in S)$, of which 
finitely many are nonzero but not all are zero, such that
$$
0 = \sum_{s\in S} \alpha_s s.
$$
\een

Our core example is $\cA=M_d(\FF)$, but our methods allow us to 
consider evaluations in general algebras.
For instance, in 
Section \ref{sec:nonPI}
we establish that a finite set $S$ of polynomials is $\cA-$locally
linearly dependent for a non-PI algebra $\cA$ if and only if $S$ is
linearly dependent.
In fact, the same conclusion holds whenever $S$ is $M_d(\FF)-$locally 
(directionally) linearly dependent for some $d$ large enough.
As a side product of our proofs we establish bounds on  $d$.

\begin{rem}
The notion of local (directional) linear dependence in free algebras is nontrivial.
For example, if $\cA$ is an $n$-dimensional algebra and $S\subseteq \FF\ax$ is any set with $|S| > n$, then 
 $S$ is 
 $\cA-$locally linearly dependent. Another example:
 any two central polynomials for $d\times d$ matrices
are $M_d(\FF)-$locally linearly dependent, although they need not be
linearly dependent in the free algebra. 
\end{rem}

\section{Results}\label{sec:main}

%This section contains our main results and their proofs.

\subsection{Local linear dependence}
\label{sec:nonPI}
We begin with one of the two of our main results.

\begin{thm}\label{thm:main}
Let $\cA$ be an $\FF-$algebra and let  $f_1,\ldots,f_m\in\FF\axn$ be $\cA-$locally linearly dependent. If $\cA$ does not satisfy a polynomial identity of degree 
\begin{equation}\label{em}
\beta:=\sum_j \deg(f_j)+m-1,
\end{equation}
 then 
$f_1,\ldots,f_m$ are linearly dependent.
\end{thm}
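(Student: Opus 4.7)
The plan is to prove the contrapositive: assuming $f_1,\ldots,f_m$ are linearly \emph{independent} in $\FF\ax$, I will produce a nonzero polynomial identity of $\cA$ of degree at most $\beta$, contradicting the hypothesis.

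First, I apply Razmyslov's Theorem~\ref{thm:raz} to the centrally closed prime algebra $\FF\ax$ itself: linear independence of $f_1,\ldots,f_m$ yields elements $b_1,\ldots,b_{m-1}\in\FF\ax$ with
\[
C_{2m-1}(f_1,\ldots,f_m,b_1,\ldots,b_{m-1})\neq 0.
\]
These witnesses $b_j$ can a priori have arbitrarily high degree, so I cannot yet read off a PI of bounded degree on $\cA$. To fix this, I pass to the auxiliary polynomial
\[
h := C_{2m-1}\bigl(f_1,\ldots,f_m,X_{n+1},\ldots,X_{n+m-1}\bigr)\in\FF\ax.
\]
The endomorphism of $\FF\ax$ fixing $X_1,\ldots,X_n$ and sending each $X_{n+j}$ to $b_j$ maps $h$ to the nonzero element above, so $h\neq 0$; moreover $\deg h \le \sum_j \deg(f_j)+(m-1)=\beta$.

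Second, I check that $h$ is a polynomial identity of $\cA$. For any $\ua=(a_1,a_2,\ldots)\in\cA^{\N}$, local linear dependence implies that $f_1(\ua),\ldots,f_m(\ua)$ are linearly dependent in $\cA$, so by the vanishing property of Capelli polynomials on linearly dependent arguments noted in Section~\ref{sec:prelim},
\[
h(\ua)=C_{2m-1}\bigl(f_1(\ua),\ldots,f_m(\ua),a_{n+1},\ldots,a_{n+m-1}\bigr)=0.
\]
Thus $h$ is a nonzero PI of $\cA$ of degree at most $\beta$, contradicting the hypothesis (padding $h$ by a power of a fresh variable, if needed, produces a PI of degree exactly $\beta$).

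The main obstacle is that Razmyslov's theorem supplies witnesses $b_j$ with no degree control. The free-substitution trick above turns that weakness into a strength: because $\FF\ax$ is free, the $b_j$ can be replaced by fresh variables without losing nontriviality, yielding a witness polynomial $h$ of the desired controlled degree $\beta$.
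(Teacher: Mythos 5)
Your proof is correct and is essentially the paper's argument run in contrapositive form: you use the same polynomial $h=C_{2m-1}(f_1,\ldots,f_m,Y_1,\ldots,Y_{m-1})$, the same observation that $\cA$-local linear dependence makes $h$ an identity of $\cA$ of degree at most $\beta$, and the same appeal to Razmyslov's theorem for the centrally closed prime algebra $\FF\ax$. Your ``free-substitution trick'' sending the fresh variables to the witnesses $b_j$ is just the reverse direction of the paper's remark that, since the $f_i$ do not involve the $Y_j$, vanishing of $h$ as a polynomial is equivalent to vanishing of $C_{2m-1}(f_1,\ldots,f_m,h_1,\ldots,h_{m-1})$ for all $h_j\in\FF\ax$.
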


\begin{proof}
For all $\uaa\in \cA^n$, the elements
$f_1(\uaa),\ldots, f_m(\uaa)$ are linearly dependent. Hence 
\beq\nonumber
C_{2m-1}(f_1(\uaa),\ldots, f_m(\uaa),b_1,\ldots,b_{m-1})=0
\eeq
for all $b_j\in\cA$. That is, $C_{2m-1}(f_1,\ldots,f_m,Y_{1},\ldots,Y_{m-1})$ is an identity of $\A$. Since the degree of this polynomial is $\sum_j \deg(f_j)+m-1$, it follows from our assumption that
$
C_{2m-1}(f_1,\ldots,f_m,Y_{1},\ldots,Y_{m-1}) = 0.
$
As the $f_i$'s do not depend on $Y_{1},\ldots,Y_{m-1}$, this trivially yields an apparently stronger conclusion
\beq\nonumber
C_{2m-1}(f_1,\ldots,f_m,h_{1},\ldots,h_{m-1}) = 0
\eeq
for all $h_{1},\ldots,h_{m-1} \in \FF\ax$.
 Hence by  Theorem \ref{thm:raz}, applied to the algebra $ \FF\ax$,
 $f_1,\ldots, f_m$ are linearly dependent.
\end{proof}

The bound $\beta$ may be sometimes too big, but in general it cannot be improved.

\begin{example}
 Let $f_1$ be a polynomial identity of $\cA$ of minimal degree. The set $\{f_1\}$ is then linearly independent and $\cA-$locally linearly dependent. In this case $\beta = \deg{f_1}$, so that $\cA$ satisfies a polynomial identity of degree $\beta$, and does not satisfy a polynomial identity of degree $ < \beta$.
\end{example}

\begin{example}
Let $\cA = \FF$. Let $f_1 = X_1$ and $f_2 = 1$. Obviously,
 the set $\{f_1,f_2\}$ is linearly independent and $\cA-$locally linearly dependent,  $\beta = 2$, $\cA$ satisfies a polynomial identity of degree $2$, and does not satisfy a polynomial identity of degree $ < 2$.
\end{example}

\begin{cor}\label{t1}
Let $\cA$ be a non-PI algebra and let $S\subset\FF\ax$ be an
$\cA-$locally linearly dependent finite set of polynomials.
Then $S$ is linearly dependent.
\end{cor}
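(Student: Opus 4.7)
The plan is to observe that Corollary \ref{t1} is essentially immediate from Theorem \ref{thm:main}, so the work consists of just verifying the hypotheses carry over.

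First I would unpack the setup: since $S$ is finite, write $S=\{f_1,\dots,f_m\}$, and since only finitely many variables $X_1,\dots,X_n$ appear across $f_1,\dots,f_m$, we may view all of them as elements of $\FF\axn$ for some fixed $n$. The given $\cA$-local linear dependence of $S$ says exactly that $f_1(\uaa),\dots,f_m(\uaa)$ are linearly dependent in $\cA$ for every $\uaa\in\cA^n$, which is precisely the hypothesis required to feed into Theorem \ref{thm:main}.

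Next I would compute the quantity $\beta=\sum_{j=1}^m \deg(f_j)+m-1$ appearing in \eqref{em}. This is a fixed finite integer depending only on the given finite set $S$. The key point is that $\cA$ is assumed to be a non-PI algebra, meaning it satisfies no polynomial identity of any degree whatsoever; in particular, $\cA$ does not satisfy a polynomial identity of degree $\beta$. Thus the nontrivial hypothesis of Theorem \ref{thm:main} is automatically met.

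Finally, invoking Theorem \ref{thm:main} directly yields that $f_1,\dots,f_m$ are linearly dependent in $\FF\ax$, which is exactly the claim that $S$ is (globally) linearly dependent. There is no genuine obstacle: the corollary is a clean specialization of Theorem \ref{thm:main} where the non-PI hypothesis on $\cA$ trivially dominates the degree bound $\beta$, so the proof amounts essentially to one line citing Theorem \ref{thm:main}.
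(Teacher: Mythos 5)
Your proposal is correct and is exactly how the paper intends Corollary \ref{t1} to be read: the corollary is stated without proof as an immediate specialization of Theorem \ref{thm:main}, since a non-PI algebra in particular satisfies no polynomial identity of degree $\beta=\sum_j\deg(f_j)+m-1$. Your added remarks (finiteness of $S$, reduction to finitely many variables $X_1,\ldots,X_n$) are the right routine verifications and match the paper's approach.
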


Since $M_s(\FF)$ does not satisfy a polynomial identity of degree $< 2s$, we also have the following corollary.

\begin{cor}\label{cor:v1a}
Let $f_1,\ldots,f_m\in\FF\axn$ be $M_s(\FF)-$locally linearly dependent
for some 
\beq\label{eq:bound1}
s> \frac12 \big(\sum_j \deg(f_j)+m-1\big). 
\eeq
Then
$f_1,\ldots,f_m$ are linearly dependent.
\end{cor}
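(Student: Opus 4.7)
The plan is to derive the corollary as an immediate specialisation of Theorem \ref{thm:main} to the matrix algebra $\cA = M_s(\FF)$. All the real work has already been done in Theorem \ref{thm:main}; what remains is a numerical check that the non-PI hypothesis there is forced by the stated inequality on $s$.

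First I would recall the comment made immediately after the statement of the Amitsur--Levitzki theorem: no polynomial of degree strictly less than $2s$ is a polynomial identity of $M_s(\FF)$. Writing $\beta := \sum_j \deg(f_j) + m - 1$ as in \eqref{em}, the hypothesis $s > \frac{1}{2}\beta$ from \eqref{eq:bound1} rearranges to $\beta < 2s$, so in particular $M_s(\FF)$ does not satisfy any polynomial identity of degree $\beta$ (indeed, of any degree at most $\beta$). Theorem \ref{thm:main} applied to $\cA = M_s(\FF)$ then yields at once that $f_1,\ldots,f_m$ are linearly dependent.

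There is essentially no obstacle in this argument, since the corollary amounts to plugging the sharp Amitsur--Levitzki lower bound $2s$ for the minimal PI-degree of $M_s(\FF)$ into the general criterion of Theorem \ref{thm:main}. The only minor point worth noting is the mild ambiguity of the phrase ``does not satisfy a polynomial identity of degree $\beta$'' in Theorem \ref{thm:main}; the inequality $\beta < 2s$ rules out polynomial identities of every degree up to and including $\beta$, so both possible readings are satisfied and the application is unambiguous.
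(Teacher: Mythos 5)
Your proposal is correct and coincides with the paper's own argument: the corollary is stated there as an immediate consequence of Theorem \ref{thm:main} together with the fact that $M_s(\FF)$ satisfies no polynomial identity of degree $<2s$, which is precisely your reduction of \eqref{eq:bound1} to $\beta<2s$. Nothing further is needed.
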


\begin{ex}
Infinitary versions of Corollary \ref{cor:v1a} fail.
\ben[\rm(1)]
\item
Since  $\St_{2n}$ is a polynomial identity for $M_n(\FF)$,
 the set 
\beq\label{eq:ex1}
S=\{\St_{2n}\mid n\in\N\}
\eeq
 is $M_s(\FF)-$locally linearly dependent (for every $s\in\N$),
but is obviously not linearly dependent in $\FF\ax$.
\item
To obtain an infinite set of polynomials in a {\em bounded} number
of variables that is locally but not globally linearly dependent
one just uses the set \eqref{eq:ex1} together with
 the fact that the free algebra $\FF\ax$ embeds
into the free algebra $\FF\langle X,Y\rangle$ on two variables
\cite[Section 2.5, Exercise 18]{Coh}
via
$$
\iota(X_1)=X, \quad \iota(X_2)=[X,Y], \ldots,
\quad 
\iota(X_n)= \big[ \iota(X_{n-1}),Y \big] ,\ldots.
$$
\een
\end{ex}

\subsection{Local directional linear dependence}\label{sec:34}

We now turn to (local) directional linear dependence.
The conclusion here is again that a finite set
of locally directionally linearly dependent polynomials
is indeed linearly dependent. However, the proof is somewhat
more involved and the bounds obtained are worse.

\begin{thm}\label{thm:v2a}
Let $\cA$ be an algebra of linear operators. 
If $f_1,\ldots,f_m\in\FF\axn$ are $\cA-$locally directionally linearly dependent and $\cA$ does not satisfy a polynomial identity of degree
 
\beq\label{eq:bound2}
\gamma := 
\frac{(d+1)(d+2)}2 \big(m-1+\sum_j \deg(f_j) \big) + d,
\quad \text{where } d=(m-1)m!,\eeq
 then
$f_1,\ldots,f_m$ are linearly dependent.
\end{thm}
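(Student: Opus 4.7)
The plan is to extend the proof of Theorem \ref{thm:main} by invoking the Capelli polynomial \emph{twice}, since local directional linear dependence only provides a rank bound on $C_{2m-1}$-values in $\cA$, rather than their outright vanishing.

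First, fix $\uaa\in\cA^n$ and view $f_1(\uaa),\ldots,f_m(\uaa)$ as locally linearly dependent operators on $V$. Lemma \ref{lem:bs} then gives, for every choice of $b_1,\ldots,b_{m-1}\in\cA\subseteq\End(V)$,
$$
\rank C_{2m-1}\bigl(f_1(\uaa),\ldots,f_m(\uaa),b_1,\ldots,b_{m-1}\bigr)\le d,
$$
where $d:=(m-1)m!$. Equivalently, writing $P:=C_{2m-1}(f_1,\ldots,f_m,Y_1,\ldots,Y_{m-1})$, the operator $P(\uaa,\ub)$ has rank at most $d$ at every evaluation $(\uaa,\ub)\in\cA^{n+m-1}$.

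Next, by Remark \ref{rem}, a rank-$r$ operator $A$ satisfies a linear dependence among $A,A^2,\ldots,A^{r+1}$. Applying this pointwise to $A=P(\uaa,\ub)$, and using that the Capelli polynomial vanishes on linearly dependent first-block arguments, the polynomial
$$
Q:=C_{2(d+1)-1}\bigl(P,P^2,\ldots,P^{d+1},Z_1,\ldots,Z_d\bigr)
$$
is an identity of $\cA$. Since every monomial of a Capelli polynomial contains each of its variables exactly once, a quick degree count yields
$$
\deg Q=\sum_{i=1}^{d+1} i\cdot\deg P+d=\frac{(d+1)(d+2)}{2}\deg P+d=\gamma,
$$
with $\deg P=m-1+\sum_j\deg f_j$. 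Since by hypothesis $\cA$ admits no polynomial identity of degree $\gamma$, we conclude that $Q=0$ in the free algebra.

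Finally, applying Razmyslov's Theorem \ref{thm:raz} to the centrally closed prime algebra $\FF\ax$, the vanishing of $Q$ forces $P,P^2,\ldots,P^{d+1}$ to be linearly dependent in the free algebra. But $\FF\ax$ is a graded domain and $P$ is multilinear in $Y_1,\ldots,Y_{m-1}$, so if $P\ne 0$ then $\deg P\ge 1$ and its distinct powers lie in different graded components, hence are linearly independent. Thus $P=0$, and a second application of Razmyslov's theorem (now to the $f_j$'s) produces the desired linear dependence. The main obstacle is spotting the nested Capelli construction together with the ``promotion'' of a rank bound to a polynomial identity via Remark \ref{rem}; once that is in place, the degree bookkeeping and the passage from ``powers linearly dependent'' back to $P=0$ are routine.
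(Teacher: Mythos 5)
Your proposal is correct and follows essentially the same route as the paper: bound the rank of $C_{2m-1}(f_1(\ua),\ldots,f_m(\ua),B_1,\ldots,B_{m-1})$ via Lemma \ref{lem:bs}, promote the resulting dependence of powers to the nested Capelli identity $Q$, kill it by the degree hypothesis, and apply Razmyslov's theorem twice, using a degree (grading in the $Y_j$) argument to pass from linear dependence of the powers of $P$ to $P=0$. Only the phrasing of that last degree argument differs slightly from the paper's; the substance is identical.
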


\begin{proof}
Let $V$ be the space on which operators from $\cA$ act.
Choose $A_i, B_j\in\cA$, $1\le i\le n$, $i\le j\le m-1$.
Let us consider 
$$
H:=C_{2m-1}(f_1(\ua),\ldots, f_m(\ua),B_1,\ldots,B_{m-1})\in\cA.
$$
By assumption, for each 
$v\in V$ the vectors $f_1(\ua)v,\ldots,f_m(\ua)v$ are linearly dependent.
Hence by Lemma \ref{lem:bs},
the rank of $H$ is at most $(m-1)m!=:d$.
So $H,H^2,\ldots, H^{d+1}$ are linearly dependent (cf. Remark \ref{rem}). In particular,
\beq
C_{2d+1}(H,H^2,\ldots,H^{d+1},D_1,\ldots,D_d)=0
\eeq
for all  $D_j\in M_s(\FF)$.
This shows that the polynomial
\beq
g=C_{2d+1}
(h,h^2,\ldots, h^{d+1},
Z_1,\ldots,Z_d)
,
\eeq
where 
\beq 
h=C_{2m-1}(f_1,\ldots,f_m,Y_1,\ldots, Y_{m-1})
\eeq
and $Y_j,Z_j$ are noncommuting indeterminates, is an identity of $\cA$.
Its degree is
\beq
\frac{(d+1)(d+2)}2 \deg(h)+d = 
\frac{(d+1)(d+2)}2 \big(m-1+\sum_j \deg(f_j) \big) + d.
\eeq
According to our assumption this implies  
$g=0$. Repeating the argument based on Theorem \ref{thm:raz}  from the proof of Theorem \ref{thm:main}
we see that
$h,h^2,\ldots, h^{d+1}$ are linearly
dependent polynomials. By comparing degrees in the $Y_j$, 
this is only possible if
$ h=0.$
Applying Theorem \ref{thm:raz} again, we obtain
that $f_1,\ldots,f_m$ are linearly dependent. 
\end{proof}

We explicitly state the matrix version of Theorem \ref{thm:v2a}:

\begin{cor}\label{cor:v2a}
Let $f_1,\ldots,f_m\in\FF\axn$ be $M_s(\FF)-$locally linearly dependent
for some 
\beq\label{eq:bound3}
s>\frac{(d+1)(d+2)}4 \big(m-1+\sum_j \deg(f_j) \big) + \frac d2,
\quad \text{where } d=(m-1)m!,
\eeq
Then
$f_1,\ldots,f_m$ are linearly dependent.
\end{cor}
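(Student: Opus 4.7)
The plan is to derive this as an immediate matrix specialization of Theorem \ref{thm:v2a}, with $\cA=M_s(\FF)$ acting on $V=\FF^s$. Since $M_s(\FF)$ is indeed an algebra of linear operators, the only nontrivial thing to verify is that the hypothesis of Theorem \ref{thm:v2a} — namely, that $M_s(\FF)$ does not satisfy a polynomial identity of degree $\gamma$, where $\gamma$ is given by \eqref{eq:bound2} — is guaranteed by the bound \eqref{eq:bound3} on $s$.

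For this, I would invoke the fact recorded in the discussion of the Amitsur–Levitzki theorem in Section \ref{sec:prelim}: a polynomial of degree strictly less than $2s$ is never a polynomial identity of $M_s(\FF)$ (this being the sharpness half of Amitsur–Levitzki). Equivalently, $M_s(\FF)$ satisfies no nonzero polynomial identity of degree $<2s$. Therefore, it suffices to ensure $2s>\gamma$, i.e.
\beqn
2s>\frac{(d+1)(d+2)}2\bigl(m-1+\textstyle\sum_j \deg(f_j)\bigr)+d,\quad d=(m-1)m!.
\eeqn
Dividing by $2$ reproduces exactly the bound \eqref{eq:bound3}. Hence under the hypothesis of the corollary, $M_s(\FF)$ fails to satisfy a polynomial identity of degree $\gamma$, and Theorem \ref{thm:v2a} applies to give the conclusion that $f_1,\ldots,f_m$ are linearly dependent.

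So the full argument is essentially a one-line invocation: check the bound, cite the sharpness of Amitsur–Levitzki, and apply Theorem \ref{thm:v2a}. There is no real obstacle; the only subtle point — purely bookkeeping — is confirming that the factor $\tfrac{(d+1)(d+2)}4$ and the additive term $\tfrac d2$ arise precisely from halving $\gamma$. (I would also note in passing that the statement is the directional analogue of Corollary \ref{cor:v1a}: the hypothesis of local directional linear dependence, as used in Theorem \ref{thm:v2a}, is weaker than $M_s(\FF)$-local linear dependence, so the corollary applies in both settings.)
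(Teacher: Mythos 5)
Your proof is correct and is essentially the paper's own (implicit) derivation: the corollary is Theorem \ref{thm:v2a} specialized to $\cA=M_s(\FF)$ acting on $\FF^s$, with the sharpness half of the Amitsur--Levitzki theorem (no polynomial identity of degree $<2s$ on $M_s(\FF)$) guaranteeing the hypothesis once $2s>\gamma$, which after halving is exactly \eqref{eq:bound3}. Your parenthetical observation that $M_s(\FF)$-local linear dependence implies local directional linear dependence, so the theorem's weaker hypothesis is satisfied by the statement as printed, is also correct and worth keeping.
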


\subsection{The Fock alternative}

A representation theoretic proof (with some functional analytic flavor)
of the local-global principles (over matrix algebras) 
can be given using the noncommutative
Fock space. Note that the bounds obtained in this way are
different from those given above in that they do not depend
on the number of polynomials under consideration, but
do depend on the number of variables appearing in our polynomials.

\begin{prop}\label{prop:fock}
Suppose 
$f_1,\ldots,f_m\in\FF\axn$ be $M_s(\FF)-$locally directionally
linearly dependent
for 
\[
\begin{split}
s& \geq \dim\R\axn_k  =
\sum_{i=0}^k n^i % = \frac{n^{k+1}-1}{n-1}
=:\sigma,
\end{split}
\]
where $k:=\max\{ \deg(f_j) \mid j=1,\ldots, m\}$.
Then $f_1,\ldots, f_m$ are linearly dependent.
\end{prop}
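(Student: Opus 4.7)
The plan is to realize the polynomials $f_j$ themselves as directional evaluations of a single carefully chosen $n$-tuple $\ua$ acting on a single vector $v$, so that local directional linear dependence immediately forces the $f_j$ to be linearly dependent as polynomials. The natural arena is the truncated Fock space.

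First I would set $V = \FF\axn_k$, the space of polynomials of degree at most $k$ in the variables $X_1,\ldots,X_n$; this is a vector space of dimension exactly $\sigma = \sum_{i=0}^k n^i$, spanned by the words of length $\le k$. For each $i = 1,\ldots,n$, define the truncated left-multiplication operator $L_i \in \En(V)$ by
\[
L_i(w) = \begin{cases} X_i w & \text{if } |w| < k, \\ 0 & \text{if } |w| = k,\end{cases}
\]
extended linearly. The key computational observation is that for any word $w = X_{i_1}\cdots X_{i_r}$ with $r \le k$ we have $L_{i_1}\cdots L_{i_r}(1) = w$, where $1\in V$ denotes the empty word; consequently, for any polynomial $f$ of degree $\le k$,
\[
f(L_1,\ldots,L_n)\cdot 1 = f \qquad (\text{as an element of } V).
\]

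Next, since $s \ge \sigma = \dim V$, I would fix a linear embedding $V \hookrightarrow \FF^s$ (say by choosing a complement and extending by zero) and correspondingly embed $\En(V) \hookrightarrow \En(\FF^s) = M_s(\FF)$ by letting the extended operators $\wt L_i$ act as $L_i$ on $V$ and as $0$ on the complement. Let $v \in \FF^s$ be the image of $1\in V$. Then $f_j(\wt L_1,\ldots,\wt L_n)\cdot v$ equals the image of $f_j$ in $\FF^s$ for every $j$.

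Now invoke the hypothesis of $M_s(\FF)$-local directional linear dependence at the tuple $\ua = (\wt L_1,\ldots,\wt L_n)$ and the vector $v$: the vectors $f_1(\ua)v,\ldots,f_m(\ua)v$ are linearly dependent in $\FF^s$. Translating this back through the embedding, the polynomials $f_1,\ldots,f_m \in V$ are linearly dependent in $V$, hence in $\FF\axn$, which is the desired conclusion. There is no real obstacle here beyond setting up the truncated Fock representation correctly; the whole argument rests on the fact that on polynomials of degree at most $k$, the left-regular representation (suitably truncated) evaluates each polynomial back to itself when applied to the vacuum vector $1$.
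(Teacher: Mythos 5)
Your proof is correct and follows essentially the same route as the paper: both construct the truncated Fock space $\FF\axn_k$ with the cut-off left-multiplication (creation) operators, use the identity $f(\us)1=f$ for polynomials of degree at most $k$, and then apply local directional linear dependence at this tuple and the vacuum vector to conclude. The only (harmless) extra detail you add is the explicit padding by zero to pass from $\dim \FF\axn_k=\sigma$ up to size $s\geq\sigma$, which the paper leaves implicit.
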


\begin{proof}
The proof is based on the noncommutative Fock space.
  Define linear operators
$S_j$ on $\R\axn_k$
by declaring, for a word 
  $v\in\axn_k$: 
\beq\label{eq:creation}
   S_j v = \begin{cases} X_j v & \deg(v)<k \\
0 & \text{otherwise}.
\end{cases}
\eeq
By construction, if $p\in\R\ax_k$,
then
\beq\label{eq:fockProp}
p(\us) 1 = p.
\eeq 
({\em Note:} The evaluation $\ev_{\us}$ yields a homomorphism
$
\R\axn \to M_{\sigma}(\FF)
$
which is one-to-one when restricted to $\R\axn_k$.)

Now if $f_1,\ldots, f_m$ are $M_s(\FF)-$locally (directionally) linearly
dependent, then by considering the directional
evaluation at $(\us,1)$, there exist
scalars $\alpha_m$ not all zero satisfying
$$
0 = \sum_{j=1}^m \alpha_m f_m(\us) 1  = \sum_{j=1}^m \alpha_m f_m.
$$
Hence the $f_j$ are linearly dependent.
\end{proof}

\subsection{Free algebras with involution}\label{subsubsec:invo}

Often one is interested in evaluating polynomials
at tuples of symmetric matrices, or is considering
polynomials in disjoint tuples of variables 
$\x,\x^*$ with the obvious notion of evaluation.
In these cases one considers one of the two notions
of free algebras with involution (symmetric
variables or free variables).
All of the results given above have corresponding 
adaptations to free algebras with involution.
The easy verifications are left as an exercise for the reader;
the only nontrivial modifications are the
results needed in the proofs. For instance,
by \cite{Sli},
a polynomial of degree $< 2d$ cannot vanish on all symmetric $d\times d$ matrices.

\end{document}